\newcommand{\reals}{{\mbox{\bf R}}}
\newcommand{\symm}{{\mbox{\bf S}}}  % symmetric matrices
\newcommand{\eg}{{\it e.g.}}
\newcommand{\ie}{{\it i.e.}}
\newcommand{\BEAS}{\begin{eqnarray*}}
\newcommand{\EEAS}{\end{eqnarray*}}
\newcommand{\BEA}{\begin{eqnarray}}
\newcommand{\EEA}{\end{eqnarray}}
\newcommand{\BEQ}{\begin{equation}}
\newcommand{\EEQ}{\end{equation}}
\newcommand{\BIT}{\begin{itemize}}
\newcommand{\EIT}{\end{itemize}}
\newtheorem{theorem}{Theorem}
\newcommand{\normsq}[1]{\left\|{#1}\right\|_2^2}
\newcommand{\norm}[1]{\left\|{#1}\right\|_2}
\newcommand{\lagr}{\mathcal{L}}
\newcommand{\umax}{u^\mathrm{max}}
\newcommand{\ellipse}{\mathcal{E}}
\newcommand{\eps}{\varepsilon}
\newcommand{\ito}[1]{1, \dots, {#1}}
\newcommand{\xcurr}{x^\mathrm{c}}
\newcommand{\xgoal}{x^\mathrm{g}}
\title{Fast Reciprocal Collision Avoidance Under Measurement Uncertainty}
\author{Guillermo Angeris\thanks{These authors contributed equally to this work.} \\ \texttt{\small angeris@stanford.edu} \\
    \and Kunal Shah\footnotemark[1] \\ \texttt{\small k2shah@stanford.edu}
    \and Mac Schwager \\ \texttt{\small schwager@stanford.edu}
\vspace{.2in}}
\date{May 2019}
\begin{document}

\maketitle

%document sections
%abstract
\begin{abstract}
We present a fully distributed collision avoidance algorithm based on convex
optimization for a team of mobile robots. This method addresses the practical 
case in which agents sense each other via measurements from noisy on-board 
sensors with no inter-agent communication. Under some mild conditions, we 
provide guarantees on mutual collision avoidance for a broad 
class of policies including the one presented. Additionally, we provide 
numerical examples of computational performance and show that, in both 2D and 
3D simulations, all agents avoid each other and reach their desired goals in 
spite of their uncertainty about the locations of other agents.
\end{abstract}

%intro
%!TEX root = ../root.tex
\section{Introduction}
Reliable collision avoidance is quickly becoming a mainstay requirement of any
scalable mobile robotics system. As robots continue to be deployed around
humans, assurances of safety become more critical, especially in high traffic
areas such as factory floors and hospital corridors. We present an on-line,
distributed collision avoidance algorithm based on convex optimization that
generates robot controls to evade moving obstacles sensed using noisy on-board
sensors. We also show that a general class of controllers, including the
one presented, guarantees mutual collision avoidance, provided that all robots
involved use this policy.  

We allow for each robot to have its own estimate of the relative positions of
other robots, which may be inconsistent with the other robots' estimates.  To
conservatively manage uncertainty in this model, we assume that each robot keeps
an uncertainty set (\eg, unions and intersections of ellipsoids) that contain
other robots' possible locations. We assume each robot knows its own position
exactly and updates its estimates of the other robots via noisy on-board sensors
such as a camera or LIDAR.

The policy is distributed in the sense that each robot \emph{only} requires an
estimate of the relative positions of the other robots. In other words, robots
do not need to communicate their position or explicitly coordinate actions with
nearby robots. Each agent then uses these position estimates to find a
safe-reachable set which is characterized by a generalized Voronoi partition.
Our algorithm computes a projection onto this set, which we show reduces to an
efficiently solvable convex optimization problem. Our method is amenable to fast
convex optimization solvers, resulting in computations times of approximately 18
ms for 100 obstacles in 3D, including setup and solution time. Furthermore, we
prove that if each agent uses this policy then mutual collision avoidance is
guaranteed.

This paper is organized as follows. The remainder of this section discusses
related work. Section~\ref{probForm} formulates the mutual avoidance problem and
gives the necessary mathematical background on generalized Voronoi partitioning.
Section~\ref{avoid} describes the collision-avoidance algorithm and provides a
collision avoidance guarantee.  Section~\ref{cvxProj} formalizes the projection
problem and describes the resulting convex optimization for ellipsoidal
uncertainties in detail. Finally, section~\ref{results} shows our method's
computational performance. Additionally, we show in 2D and 3D simulations that
all agents avoid each other and navigate to their goal locations despite their
positional uncertainty of other agents.

\subsection{Related Work}
The most closely related methods for fully distributed collision
avoidance in the literature are the velocity obstacle (VO) methods,
which can be used for a variety of collision avoidance strategies.
These methods work by extrapolating the next position of an obstacle
using its current velocity. One of the most common tools used for
mutual collision avoidance is the Reciprocal Velocity Obstacles
(RVO)~\cite{rvo:2008,rvo_n:2011,ocra:2016} method in which each agent
solves a linear program to find its next step. The Buffered Voronoi
Cell (BVC)~\cite{bvc:2017} method provides similar avoidance
guarantees but does not require the ego agent to know other agents'
velocities, which can be difficult to estimate accurately. The BVC
algorithm opts instead for defining a given distance margin to compute
safe paths. BVC methods have been coupled with other decentralized
path planing tools~\cite{heirRobust:2019} in order to successfully
navigate more cluttered environments, but require that the other
agents' positions are known exactly.

While both VO and BVC methods scale very well to many (more than 100)
agents, they also require perfect state information of other agents'
positions (BVC), or positions and velocities (RVO). In many practical
cases, high accuracy state information, especially velocity, may not
be accessible as agents are estimating the position of the same
objects they are trying to avoid. Extensions to VO that account for
uncertainty have been studied under bounded~\cite{boundedLoc:2012} and
unbounded~\cite{PRVO:2017} localization uncertainties by utilizing
chance constraints. While these have been extended to decentralized
methods~\cite{chanceCon:2019}, they assume constant velocity of the
obstacles at plan time. Combined Voronoi partitioning and estimation
methods have been studied for mult-agent path planing
tasks~\cite{transport:2014}, but still require communication to build
an estimate via consensus.  In contrast, our method does not require
any communication or velocity state information, nor does it require
the true position of the other agents. Instead, the algorithm uses
only an estimate of the current position of the nearby agents and
their reachable set within some time horizon.

Our algorithm takes a nominal desired trajectory or goal point (which
can come from any source, much like~\cite{rvo:2008,bvc:2017}), and
returns a safe next step for an agent to take while accounting for
both the uncertainty and the physical extent of the other agents in
the vicinity.  The focus of this work is on fast, on-board refinement
rather than total path planning. More specifically, while the
algorithm presented could be used to reach a far away goal point, it
is likely more useful as a small-to-medium scale planner for reaching
waypoints in a larger, centrally or locally planned, trajectory.

Similarly, single agent path planners such as A* or
rapidly-exploring random trees (RRT) can be applied to the multi-agent
case, but the solution times grow rapidly due to the exploding size of
the joint state space. For example, graph-search methods can be
partially decoupled~\cite{MSTAR:2011} to better scale for larger
multi-agent systems, but can explore the entire joint state-space in
the worst case. Fast Marching Tree (FMT) methods~\cite{fmt::2015} are
similar to RRTs in that they dynamically build a graph via
sampling. But, while FMT methods have better performance in higher
dimensional systems, they still require the paths to be centrally
calculated.  Using barrier functions~\cite{barrier:2016}, A* can also
be used in dense environments for decentralized multi-agent planning,
but require the true position of all the other agents. On the other
hand, while all of these methods require global knowledge and large
searches over a discrete set, they can be used as waypoint
generators that feed into our method---for use, \eg, in cluttered
environments.

Optimization methods that use sequential convex programming
(SCP)~\cite{scp:2013,scpQuad:2012,scpSpace:2014} have also been studied for
multi-agent path planning; however, these algorithms are still centralized and
may exhibit slow convergence, making them unreliable for on-line planning. For
some systems, these methods can be partially decoupled~\cite{iscp:2015},
reducing computation time at the cost of potentially returning infeasible paths.
Our method, in comparison, is fully decentralized and produces an efficient convex
program for each agent. The solution of this program is a safe waypoint for the
agent, which, unlike SCP methods, requires no further refinment.

%Additionally, some end-to-end deep reinforcement learning
%methods~\cite{DeepRL:2017,DeepRL:2018} have been applied to this problem to
%produce controls as a function of state information but may not be certifiable
%for safety-critical applications. In contrast, our method provides a mutual
%collision avoidance guarantee if the group of agents in question all use the
%presented algorithm.

%prob form
%!TEX root = ../root.tex
\section{Problem Formulation}\label{probForm}
Consider a group of $N$ dynamic agents. Let $x_i(t) \in \reals^n$ be the
position of agent $i$ (also referred to as the ``ego agent") for $i=\ito{N}$ at 
time $t=\ito{T}$, where each agent
satisfies single integrator dynamics,
\begin{align}
    x_i(t+1) = x_i(t) + u_i(t), ~~ i=\ito{N}, ~~ t=\ito{T},
\label{dynamics}
\end{align}
with control $u_i(t) \in \reals^n$ and $\norm{u_i(t)} \leq \umax$. In addition,
every agent $i$ will maintain a set-based estimate of the location of every
other agent $j \ne i$ as a set $\ellipse^j_i(t)$, such that
\begin{equation}
    x_j(t) \in \ellipse^j_i(t), ~~ i=\ito{N}, ~~ t=\ito{T}.
\label{obvs}
\end{equation}
In other words, the true position of agent $j$ at time $t$ must always be inside
the noisy estimate $\ellipse^j_i(t)$ maintained by agent $i$. In practice, this
set can be obtained from a high probability confidence ellipsoid of a Bayesian
filter or from a set-membership filter~\cite{bertsekas::1971}. We do not
restrict the size of this uncertainty region, though we note that large
uncertainties will restrict an agent's possible actions.

Let the \emph{safe-reachable} set $S_i(t)$ be the generalized Voronoi
cell~\cite{genvoro::1981,huang:2012,shah:2019} generated between agent $i$'s 
current position, $x_i(t)$, and the set of estimates, $\ellipse^j_i$, that agent
$i$ has of every agent $j$ at time $t$. That is,
\begin{equation}\label{genVoro}
    S_i(t) = \left\{ z ~\middle|~ \norm{z-x_i(t)} \leq \|z-\ellipse^j_i(t)\|_2 ~~
    \forall j=\ito{N}, ~  j\neq i \right\}
\end{equation}
where $\|z-\ellipse^j_i(t)\|_2$ is the distance-to-set metric
$\|z-\ellipse^j_i(t)\|_2=\inf_{y\in \ellipse^j_i(t)} \norm{z-y}$.

More explicitly, $S_i(t)$ is the set of points which lie closer to
agent $i$ than to any possible position of agent $j$ within its
uncertainty set $\ellipse^j_i$. This has a natural interpretation in
the case where all agents share the same single-integrator dynamics
(though this condition is not necessary for the algorithm, in
general): $S_i(t)$ is the set of all points which agent $i$ is
guaranteed to reach from its current position before any other
agent. We emphasize that since each agent will often maintain
different estimates of the positions of other agents, the sets
$S_i(t)$ generally do not form a partition of $\reals^n$, as the union
$\cup_i S_i(t)$ does not equal the entire
space. Figure~\ref{projCurves} shows the generalized Voronoi cell
boundaries in an example simulation: each agent is shown with the
estimates it has of other agents, along with the agent's goal position
and projected goal position.
\begin{figure}
\centering
\includegraphics[width=.6\textwidth]{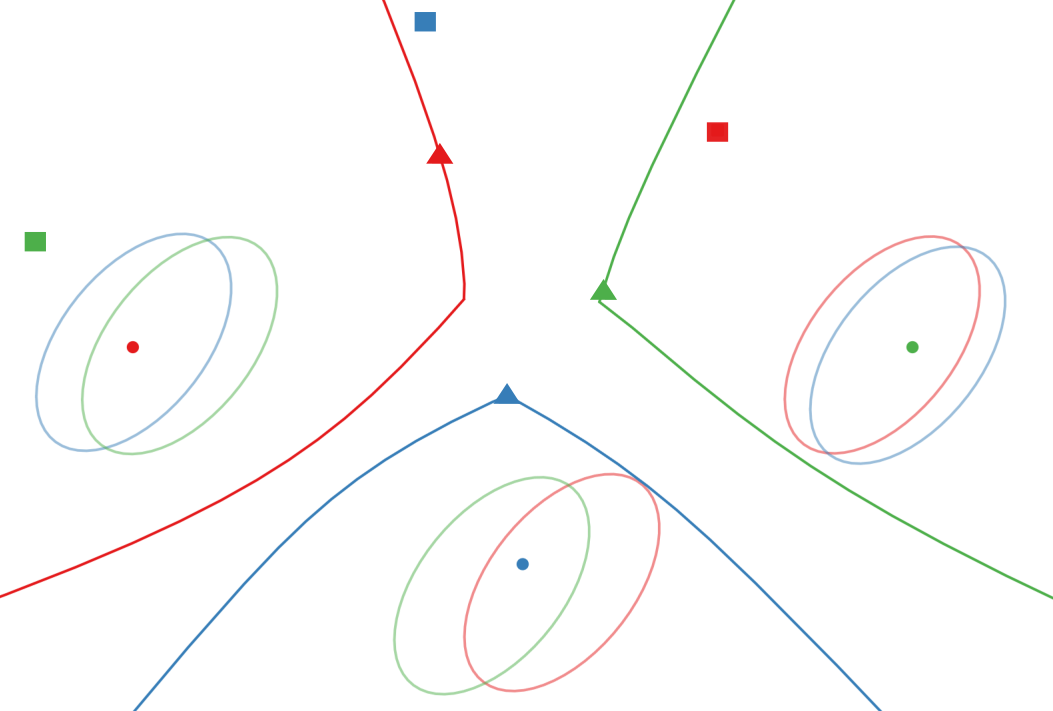} 
\caption{Generalized Voronoi cells of each
          (blue, red, green) agent's position (circle) with
          goal point (square) and safe projected goal point
          (triangle). Each agent uses an ellipsoidal estimate
          (colored ellipses) of each other agent's position to
          generate its own cell.}
\label{projCurves}
\end{figure}
\begin{figure}
\centering
\includegraphics[width=.5\textwidth]{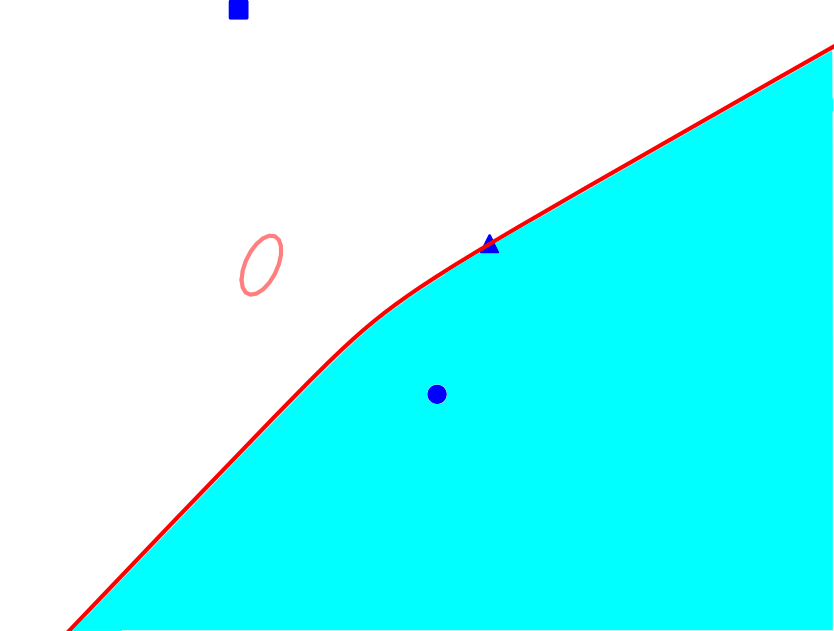}
\captionof{figure}{Projection (blue triangle) of a goal point
  (blue square) onto a generalized Voronoi region (cyan
  region). The boundary (red curve) is equidistant from the
  agent's position (blue point) and the closest point inside
  the ellipsoidal estimate of the obstacle (red ellipse).}
\label{fig::proj}
\end{figure}
We also note that $S_i(t)$ is a relatively conservative 
estimate of the set of collision-free points. Let
\begin{equation*}
	\ellipse_i(t) = \bigcup_{\substack{j=1\\ j\ne i}}^N \ellipse^j_i(t),
\end{equation*}
that is, $\ellipse_i(t)$ is the set of all points which could potentially
include another agent---from the perspective of agent $i$---then the
safe-reachable set satisfies
\begin{equation*}\label{safe-set}
    S_i(t) \subseteq \reals^n \setminus \ellipse_i(t).
\end{equation*}
Additionally, we note that each set $S_i(t)$ is the intersection of an infinite
number of half-spaces since~\eqref{genVoro} can be rewritten as
\begin{equation}\label{vc_cvx}
    S_i(t) = \bigcap_{y \in \ellipse_i(t)}\left\{ z ~ \middle| ~ \norm{z-x_i}
    \leq \norm{z-y}\right\},
\end{equation}
which implies that $S_i(t)$ is a convex set~\cite[\S2.3.1]{cvxbook}. This means
that a projection of an arbitrary point (such as a goal destination or
way-point) onto this set can be formulated as a (potentially infinitely large)
convex optimization problem. In the next section, we describe an algorithm which
uses these projections and then prove collision-free guarantees for this
algorithm, while in the following section, we show how these projections can be
efficiently computed.

Normally, uncertainties are defined for points in space (\ie, the center of a
robot in $\reals^3$); however, without physical extent, collision is a measure
zero event. To guarantee collision avoidance, we must account for the
uncertainty as well as the physical size of the other agents.  Assuming that the
robot's physical extent can be represented by an ellipsoid, we can combine the
physical size ellipsoid with the uncertainty ellipsoid such that their Minkowski
sum is contained inside another bounding ellipsoid $\bar{\ellipse}$.
%Suppose $\ellipse_i(\mu_i, \Sigma_i)=\{y~|~(y -\mu_i)^T\Sigma_i^{-1}(y - \mu) 
%\le 1\}$. 
In particular, we seek a $\bar{\ellipse} \supseteq\ellipse_\mathrm{estimate}
\oplus \ellipse_\mathrm{size}$. We can find a small ellipsoid which satisfies
this propety efficiently (in fact, analytically) by solving a minimum trace
optimization problem, the details can be found in~\cite{liu::2016}.
Additionally, using ellipsoidal margins allows us to potentially account for
more complicated effects that could not be reasonably represented by spherical
margins. For example,~\cite{downwash:2017} uses ellipsoids elongated in the
$z$-axis to generate keepout zones above and below quadrotors which account for
the downwash effect of the propellers.

%proof
%!TEX root = ../root.tex
\section{Collision Avoidance}\label{avoid}
We present an algorithm which uses the projections described in
section~\ref{probForm} to reach a given goal. We then present a general proof of
collision avoidance guarantees for a class of algorithms including the
one presented in~\cite{bvc:2017} and Algorithm~\ref{alg:gen} (below), along 
with its natural extensions.

\subsection{General Algorithm}
To simplify notation, let $R_i(t)$ be the reachable set of positions of each
agent $i$ at time $t$. We note that, in our case, $R_i(t)$ is the intersection
of the closed $\ell_2$-ball with radius $\umax$ and the generalized Voronoi cell
of agent $i$ at time $t$.  We also note that this is a compact set which always
contains $0$ if it is not empty.

Let $\ellipse_i^j(t)$ be the uncertainty ellipsoid with nonzero volume that
agent $i$ has of agent $j \ne i$ at time $t$ and let $P_i(x_i, \ellipse_i, t)$
be the projection of agent $i$'s goal into the intersection of its generalized
Voronoi cell and its reachable set at time $t$. As a technical requirement, we
will require that agent $j$'s true position lies in the interior of
$\ellipse_i^j(t)$. Since this projection can fail (\eg, when agent $i$'s
uncertainty of agent $j$'s position is large enough to include the position of
agent $i$), we allow the projection function to return either a point in
$\reals^n$, or a symbol, $\neg$, which indicates that the projection
failed---\ie, there is no safe point to move to.

\begin{algorithm}[H]
\caption{General projection algorithm}
\label{alg:gen}
\begin{algorithmic}
\STATE $i \gets$ current agent.
\STATE $x_i(1) \gets$ initial position of agent $i$.
\FOR{$t=1, \dots, T$}
    \STATE $q \gets P_i(x_i, \ellipse_i, t)$
	\IF{$q = \neg$}
		\STATE{$x_i(t+1) \gets x_i(t)$}
	\ELSIF{$q \ne \neg$}
		\STATE{$x_i(t+1) \gets q$}
	\ENDIF
\ENDFOR
\end{algorithmic}
\end{algorithm}

Algorithm~\ref{alg:gen} states that, if the projection fails,
the agent does not move from its current position; otherwise, the agent moves 
towards the next projection.

\begin{theorem}\label{thm:alg1}
    Algorithm~\ref{alg:gen} is collision free, assuming all agents start from a
    collision free configuration.
\end{theorem}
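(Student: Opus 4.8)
The plan is to argue inductively on the time step $t$ that the true configuration $(x_1(t), \dots, x_N(t))$ remains collision free, i.e.\ that $x_i(t) \neq x_j(t)$ for all $i \neq j$ (or, more precisely, that $x_j(t) \notin \bar{\ellipse}^j_i(t)$, the bounding set accounting for physical extent). The base case is the hypothesis that agents start from a collision free configuration. For the inductive step, the key observation is that whatever each agent $i$ does --- whether its projection succeeds or fails --- its new position $x_i(t+1)$ lies in its own safe-reachable set $S_i(t)$: if the projection succeeds, $x_i(t+1) = q = P_i(x_i, \ellipse_i, t) \in S_i(t)$ by definition of the projection; if it fails, $x_i(t+1) = x_i(t)$, and $x_i(t) \in S_i(t)$ trivially since $\norm{x_i(t) - x_i(t)} = 0 \le \norm{x_i(t) - \ellipse^j_i(t)}_2$ for every $j$ (here we use the technical requirement that $x_i(t)$ is strictly outside each $\ellipse^j_i(t)$, so in fact the inequality is strict and $x_i(t)$ lies in the interior of $S_i(t)$).

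Next I would exploit the defining inequality of the generalized Voronoi cell together with the containment hypothesis~\eqref{obvs}. Fix a pair $i \neq j$. Since $x_i(t+1) \in S_i(t)$, we have $\norm{x_i(t+1) - x_i(t)} \le \norm{x_i(t+1) - \ellipse^j_i(t)}_2 \le \norm{x_i(t+1) - x_j(t)}$, because $x_j(t) \in \ellipse^j_i(t)$ and the distance-to-set is a lower bound on the distance to any particular member. Symmetrically, $x_j(t+1) \in S_j(t)$ gives $\norm{x_j(t+1) - x_j(t)} \le \norm{x_j(t+1) - x_i(t)}$. Now I combine these with a triangle-inequality / interchange argument: intuitively, agent $i$ stays in the half-space of points closer to $x_i(t)$ than to $x_j(t)$, and agent $j$ stays in the complementary half-space, and since these half-spaces are separated by the perpendicular bisector of the segment $x_i(t)x_j(t)$, the two new positions cannot coincide. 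Concretely, if $x_i(t+1) = x_j(t+1) =: z$, then $\norm{z - x_i(t)} \le \norm{z - x_j(t)}$ and $\norm{z - x_j(t)} \le \norm{z - x_i(t)}$, forcing $\norm{z - x_i(t)} = \norm{z - x_j(t)}$; this alone does not yield a contradiction, so I will need to sharpen the inequalities to \emph{strict} ones, which is exactly where the "interior" hypotheses enter.

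The main obstacle, then, is upgrading the weak inequalities to strict ones so that equality of the two new positions is genuinely ruled out, and moreover handling the physical-extent margin $\bar{\ellipse}$ rather than mere point-coincidence. The clean way to do this is: since agent $j$'s true position $x_j(t)$ lies in the \emph{interior} of $\ellipse^j_i(t)$, and $\ellipse^j_i(t)$ has nonzero volume, there is a ball of radius $\delta > 0$ around $x_j(t)$ contained in $\ellipse^j_i(t)$; hence $\norm{x_i(t+1) - \ellipse^j_i(t)}_2 \le \norm{x_i(t+1) - x_j(t)} - \delta$ whenever $x_i(t+1)$ is outside that ball, giving the strict gap $\norm{x_i(t+1) - x_i(t)} < \norm{x_i(t+1) - x_j(t)}$, and likewise with $i,j$ swapped. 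Adding the two strict inequalities and applying the triangle inequality $\norm{x_i(t) - x_j(t)} \le \norm{x_i(t) - x_i(t+1)} + \norm{x_i(t+1) - x_j(t)}$ would need care; a cleaner route is to note that each agent remains strictly on its own side of the perpendicular bisector hyperplane of $x_i(t)x_j(t)$, so their positions stay separated by that hyperplane at every step, and the separation of the physical ellipsoids follows by absorbing the extent of both robots into the radius $\delta$ used to define the effective uncertainty sets $\bar{\ellipse}$ (as described in Section~\ref{probForm}). I would close by remarking that this argument uses only that each agent moves into its own generalized Voronoi cell, so it applies verbatim to any policy with that property --- in particular to the BVC algorithm of~\cite{bvc:2017} and to Algorithm~\ref{alg:gen} --- which is the "broad class of policies" claim from the abstract.
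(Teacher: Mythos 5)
Your argument is correct, but it is organized differently from the paper's proof, and the comparison is worth making. The paper abstracts the projection into two properties, \eqref{eq:proj}: the returned point lies in $x_i(t)+R_i(t)$ and does \emph{not} lie in $x_j(t)+R_j(t)$ for any $j\ne i$; it then does an explicit case split on which of $P_i,P_j$ return $\neg$, and obtains positive separation from the fact that $x_j(t+1)\in x_j(t)+R_j(t)$, a compact set that excludes $x_i(t+1)$ (with $0\in R_i(t)$ handling the failure case). You instead observe that in \emph{every} branch of Algorithm~\ref{alg:gen} the new position satisfies $x_i(t+1)\in S_i(t)$, and derive separation directly from the two Voronoi inequalities plus the containment $x_j(t)\in\ellipse_i^j(t)$: each agent ends up (strictly, via the interior hypothesis on the true position) on its own side of the perpendicular bisector of $x_i(t)x_j(t)$. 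This unifies the paper's case analysis and makes explicit the geometric content that the paper compresses into its one-sentence justification of property (b) of \eqref{eq:proj}; what you lose is the paper's deliberate abstraction, which is what lets its discussion section extend the theorem to arbitrary projection functions and reachable sets satisfying \eqref{eq:proj}, compactness, and $0\in R_i(t)$ --- your closing remark recovers a version of that generality for Voronoi-cell-respecting policies specifically. You correctly identify that strictness is the crux and that the interior condition on $x_j(t)\in\ellipse_i^j(t)$ supplies it. One small blemish: your parenthetical claim that $x_i(t)$ is \emph{strictly outside} each $\ellipse_i^j(t)$ and hence lies in the interior of $S_i(t)$ is neither assumed by the paper (the projection is allowed to fail precisely because $x_i(t)$ may lie inside $\ellipse_i^j(t)$, in which case $S_i(t)$ can degenerate to the singleton $\{x_i(t)\}$) nor needed --- $x_i(t)\in S_i(t)$ holds trivially since $\norm{x_i(t)-x_i(t)}=0$, and in the failure branch $x_i(t+1)=x_i(t)$ is separated from $x_j(t+1)$ by the one-sided strict inequality coming from agent $j$'s cell alone. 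The sketch of absorbing physical extent into the radius $\delta$ is also looser than the rest; the paper sidesteps this by folding physical size into $\bar{\ellipse}$ in \S\ref{probForm} and proving only pointwise positive separation in the theorem itself.
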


%The construction of algorithm~\ref{alg:gen} should make it somewhat clear that
%no collision happens if all agents follow it. We present a proof of this fact
%below.

\begin{proof}
First, we note the following properties of the projection function defined
above. If the function yields a point in $\reals^n$ (and not $\neg$) then the
following statements are satisfied,
\begin{equation}\label{eq:proj}
\begin{aligned}
    P_i(x_i, \ellipse_i, t) &\in x_i(t) + R_i(t), ~~~ t=1, \dots, T\\
    P_i(x_i, \ellipse_i, t) &\not\in x_j(t) + R_j(t), ~~~ t=1, \dots, T, ~~ j=1,\dots, N, ~~ j\ne i,
\end{aligned}
\end{equation}
where the sum of a vector $v \in \reals^n$ and a set $M \subseteq \reals^n$ is
defined as $v+M=\{v + w  \mid w \in M\}$. In other words,
condition~\eqref{eq:proj} requires that the projection, if it returns a point,
must (a) return a point which is in the agent's reachable set, and (b) cannot
return a point which could lie in region of another agent's reachable set, since
agent $i$ does not know where agent $j$ is headed.

The first statement (a) comes from the fact that, by definition, the projection
function gives a point in the intersection of the reachable set and the
generalized Voronoi cell. The second (b) comes from the fact that the set which
we project into contains only the points which are reachable by agent $i$ before
they are by agent $j$, and agent $j$ maintains an uncertainty ellipsoid of agent
$i$'s position which includes agent $i$'s true position. This implies that the
reachable set of agent $j$ at time $t$ cannot include the projection of agent
$i$ at time $t$.

Now, assume that all agents start a positive distance apart at time $t=1$ and
consider any two agents $i\ne j$. Recall that our projection function $P_i(x_i,
\ellipse_i, t)$ returns a point or returns~$\neg$. Suppose that a point is
returned, then, by definition of algorithm~\ref{alg:gen} and by~\eqref{eq:proj},
we know that
\[
x_i(t+1) = P_i(x_i, \ellipse_i, t) \not \in x_j(t) + R_j,
\]
so $x_i(t+1)$ must be a positive distance from $x_j(t) + R_j(t)$, by compactness of
$R_j(t)$. Since, by definition, $x_j(t) + R_j(t)$ contains the point $x_j(t+1)$, then
$x_i(t+1)$ is always a positive distance away from $x_j(t+1)$.

Now, assume that $P_i(x_i, \ellipse_i, t)$ returns $\neg$, then, for the other
agent, $P_j(x_j, \ellipse_j, t)$ either returns $\neg$ (in which case no
collision happens, since neither agent has moved, by definition of
algorithm~\ref{alg:gen}) or $P_j$ returns a point which satisfies
\[
    x_j(t+1) = P_j(x_j, \ellipse_j, t) \not \in x_i(t) + R_i(t).
\]
But, since agent $i$ has not moved, $x_i(t+1) = x_i(t) \in x_i(t) + R_i(t)$ (since
$0 \in R_i(t)$, by definition), so $x_j(t+1)$ is a positive distance away from
$x_i(t+1)$, again by compactness of $R_i(t)$.

Since this is true for any two agents, then all agents stay a positive distance
apart from each other for each time $t=1, \dots, T$, and no collision happens.
\end{proof}

\subsection{Generalizations and discussion}
While the proof above is presented only in the context of
algorithm~\ref{alg:gen} with the projection function and reachable sets
specified in \S\ref{probForm}, the proof is almost immediately generalizable to
many other projection functions
% ---which may not be proper projection functions in any standard
% way---
and reachable sets. We present a few of these
generalizations below.

\paragraph{Proof requirements.} In the proof of theorem~\ref{thm:alg1}, we only
used the following three facts: (a) the projection function
satisfies~\eqref{eq:proj}, (b) that $R_i(t)$ was compact for each agent $i$ at
each time $t$, and (c) that 0 is in the reachable set ($0 \in R_i(t)$). This
means that any set of agents and projection functions that satisfy the above
conditions are immediately guaranteed to be collision free, if they use
algorithm~\ref{alg:gen}. There are many such projection functions, for example
the trivial function (which always returns $\neg$) and functions which are
potentially very complicated and depend on the histories of the uncertainties,
but all cases guarantee collision avoidance so long as these conditions are
satisfied.

\paragraph{Minimum distance.} The minimum distance between agents depends on the
aggressiveness of the projection function---in the sense that the proof above only
guarantees a non-zero (but arbitrarily small) separation. We can
give slightly better bounds by under-projecting to ensure that, for every $i\ne
j$ and time $t$,
\begin{equation}\label{eq:min-eps}
    \|P_i(x_i, \ellipse_i, t)- (x_j(t) + R_j(t))\|_2 \ge \eps > 0.
\end{equation}
This guarantees that every pair of agents will have a separation of at least
$\eps$. 

It is sometimes easy to generate the under-projection condition
in~(\ref{eq:min-eps}). For example, this is possible in the case where $R_j(t)$
is a convex set\footnote{More generally, a star-shaped domain around $0 \in
R_j(t)$ would suffice.} and $\{P_j\}$ is any set of valid projection functions.
To generate this, we choose the $q$ of algorithm~\ref{alg:gen} to be a convex
combination of the current position and the projected position which satisfies
the inequality above. This ensures that every pair of agents is at least
$\eps$-separated---assuming all agents start with at least $\eps$ separation at
time $t=1$---for all $t=1, \dots, T$.

\paragraph{Uncertainties.} The sets $\ellipse_i^j(t)$ for $t=1, \dots, T$ and
$j=1, \dots, N$ with $j\ne i$ only play a role as the arguments to $P_i$, but
there is no requirement that these measurements be accurate or even bounded in
any sense. It is possible for one agent to temporarily have large uncertainty
about the positions of all other agents (\eg, in the case of GPS loss) before
gaining a more accurate measurement and continuing to its objective. Of course,
the usefulness of the projection function will heavily depend on the quality of
these projections, but we are guaranteed to be collision free at every point in
time, independent of these assumptions, so long as condition~(\ref{eq:proj}) is
satisfied.

\paragraph{Relaxing the reachability condition.} In the proof of
theorem~\ref{thm:alg1} we assume that the current position is always a reachable
state for agent $i$ (\ie, that $0 \in R_i(t)$). While this is the case in
single-integrator dynamics, for example, it is not the case in general. It is
possible to weaken this assumption slightly by ensuring that the agent can stop
within some $\eps$-ball, such that some point in $x_i(t) + \eps B$ is always
reachable (where $\eps > 0$ and $B = \{x \in \reals^n \mid \norm{x} \le 1\}$)
from state $x_i(t)$, but the assumptions on the projection functions, $P_i$,
must be strengthened considerably from the general ones given in
condition~\eqref{eq:proj}.

\paragraph{Asymmetric dynamics.} It is rarely the case that the dynamics of the
agents in question are single-integrator dynamics with the same maximal input
for all agents, as we assumed at the beginning of this proof. In the case that
the dynamics are asymmetric among agents, we can guarantee
condition~\eqref{eq:proj} with the projections presented in this section by
simply expanding the uncertainty ellipsoid by a margin which includes the
reachability of the other agents. That is, we can replace the uncertainty
ellipsoid that agent $i$ has of agent $j$, originally given by $\ellipse_i^j(t)$
at time $t$ with $\ellipse_i^j(t) \oplus R_j(t)$ (or any other outer envelope 
of this
set). Using this new uncertainty to generate a safe Voronoi region and using its
corresponding projection, as given in \S\ref{safe-set}, is then guaranteed to be
collision free.

%projection
\section{Projecting onto Generalized Voronoi Cells}\label{cvxProj}
In this section, we describe a solution to the problem of efficiently projecting
a point into a generalized Voronoi region.

First, we construct a program which is equivalent to finding a projection into a
convex set of the form~\eqref{vc_cvx}, but may not be easy to solve as its
constraint is not representable in any standard form. We then use Lagrange
duality to construct a convex constraint that is at least as strict as the
original and use strong duality to show that this constraint is
equivalent to the original problem. Finally, we provide a conic problem for
the case of ellipsoid generated Voronoi regions with the constraint explicitly
parametrized by the ellipsoid parameters $(\mu, \Sigma)$.  We also show that,
for the ellipsoidal case, the constraint is represented by a sum of
quadratic-over-linear terms, implying that the convex program is a second order
cone program (SOCP) and can therefore be solved quickly by embedded solvers.

\subsection{Problem Statement}
As in~\cite{bvc:2017}, in order to execute the collision-avoidance strategy,
each agent must project its goal point onto its safe-reachable set as defined
in~\eqref{vc_cvx}. This problem is always convex since the Voronoi region is
generated by an (arbitrary) intersection of hyperplanes~\cite[\S2.3.1]{cvxbook},
which always results in a convex set.

Consider the case in which the Voronoi cell is generated by a point $x$ and
a single convex set, $\ellipse$, defined by
\[
\ellipse = \{y~|~f(y) \le 0\},
\]
where $f:\reals^n \to \reals^m$ is a convex function and the inequality is taken
elementwise. This allows us to write the problem as finding a projection point $x$ which
solves
\begin{equation}
    \label{opt:proj}
	\begin{aligned}
		& \underset{x}{\text{minimize}}
		& & \normsq{x-\xgoal} \\
		& \text{subject to}
        & & x \in V(\xcurr, \ellipse)
	\end{aligned}
\end{equation}
where $\xcurr$ is the current position of the agent, $\xgoal$ is the goal, and
$V(\xcurr, \ellipse)$ is the agent's generalized Voronoi region,
\[
V(\xcurr, \ellipse)= \{z \mid \norm{z-\xcurr}\leq \inf_{y\in \ellipse}\norm{z-y}
\}.
\]
For some special cases of $\ellipse$, such as a circle or sphere, the
constraint's infimum can be found analytically. However, this may not be
possible for arbitrary sets.  For example, in the case where $\ellipse$ is an
ellipsoid, no analytical distance function has been found~\cite{p2eDist:2018}.

Using the definitions of $\ellipse$ and $V(\xcurr, \ellipse)$, we can 
write~\eqref{opt:proj} as
\begin{equation}
    \label{eq:main}
	\begin{aligned}
		& \underset{x}{\text{minimize}}
		& & \normsq{x - \xgoal} \\
		& \text{subject to}
        & & \normsq{\xcurr} - 2x^T\xcurr \le \inf_{f(y) \le 0}\left(\normsq{y} -
        2x^Ty\right).
	\end{aligned}
\end{equation}
This is the problem formulation we will use throughout. Figure~\ref{fig::proj}
gives an illustration of the projection problem.

\subsection{Lagrange Duality}
At the moment, it is not obvious how to represent the the constraint given in
problem~\eqref{eq:main} in a standard or easy-to-solve form.

If we can find a lower bound to the right hand side of the constraint in
problem~\eqref{eq:main} which makes the resulting problem easy to solve, then
we can find a feasible (\ie, safe) point, $x$, which may not be optimal. More
concretely, suppose we have a function $\hat g$ which satisfies, for every $x$,
\[
    \hat g(x) \le \inf_{f(y) \le 0} \left(\normsq{y} - 2x^Ty\right),
\]
then any $x$ which satisfies
\[
\normsq{\xcurr} - 2x^T\xcurr \le \hat g(x),
\]
also satisfies
\[
\normsq{\xcurr} - 2x^T\xcurr \le \inf_{f(y) \le 0} \left(\normsq{y} - 2x^Ty\right),
\]
making $x$ a feasible point for problem~(\ref{eq:main}). One standard way of
forming such a lower bound is via Lagrange duality~\cite[\S5.1]{cvxbook}. The 
Lagrangian of the infimum in~\eqref{eq:main}
\[
\lagr(y, x, \lambda) = \normsq{y} - 2x^Ty + \lambda^Tf(y),
\]
with $\lambda \in \reals^m$ and $\lambda \ge 0$. This gives us the Lagrange dual
function
\begin{equation}\label{larg}
g(x, \lambda) = \inf_y \lagr(y, x, \lambda) = \inf_y \left(\normsq{y} - 2x^Ty + \lambda^Tf(y)\right).
\end{equation}
By weak duality~\cite[\S5.1.3]{cvxbook}, for every $\lambda \ge 0$ and each $x$,
\[
g(x, \lambda) \le \inf_{f(y) \le 0} \left(\normsq{y} - 2x^Ty\right),
\]
as required.

Additionally, since the dual function $g$ is jointly concave in
both $\lambda$ and $x$, the resulting inequality constraint is convex,
\[
    \normsq{\xcurr} - 2x^T\xcurr \le g(x, \lambda).
\]

\paragraph{Strong duality.}
Due to the lower bound property of $g$, the optimization problem,
\begin{equation}
	\label{eq:dual-form}
	\begin{aligned}
		& \underset{x,\ \lambda}{\text{minimize}}
		& & \normsq{x-\xgoal} \\
		& \text{subject to}
		& & \normsq{\xcurr} - 2x^T\xcurr \le g(x, \lambda) \\
        &&& \lambda \ge 0,
	\end{aligned}
\end{equation}
is potentially more restrictive than the original and is thus an upper bound
on the optimal objective of problem~\eqref{eq:main}. Due to strong duality
holding in almost all cases of interest (\ie, all cases where the set $\ellipse$
has nonempty interior), we will see that problems~\eqref{eq:dual-form} and
\eqref{eq:main} are equivalent which implies that every optimal solution to
problem~\eqref{eq:dual-form} is an optimal solution to problem~\eqref{eq:main};
in other words, the relaxation provided is tight.

In particular, Slater's condition holds for any convex set $\ellipse$ with
non-empty interior (in three dimensions, this would be any convex set with
nonzero volume). Since Slater's condition implies strong
duality~\cite[\S5.3.2]{cvxbook}, then for each $x$, there exists some
$\lambda^\star \ge 0$ such that \[ g(x, \lambda^\star) = \inf_{f(y) \le 0}
\left(\normsq{y} - 2x^Ty\right), \] which means that a solution to
problem~(\ref{eq:dual-form}) is \emph{always} a solution to
problem~\eqref{eq:main}.

Given an arbitrary convex function $f$, it is unclear if the
associated function $g$ defined by~\eqref{larg} has an analytic form or is even
easy to evaluate. In the following section, we derive an analytic form for $g$
in the case that $\ellipse$ is an ellipsoid. In the appendix, we also derive an
analytic form for $g$ for polyhedral sets. We also show that it is possible to
construct more complicated sets from the union and intersection of these
ellipsoidal and polyhedral `atoms' allowing the user to specify complicated, 
non-convex sets as the uncertainty regions of other agents or
obstacles.

\subsection{Constraints for Regions Generated by Ellipsoids}
If the set $\ellipse$ is specified by a convex quadratic constraint, as in the case of
ellipsoids, then the set $\ellipse$ can be written as,
\begin{align*}
	\ellipse
	&=\{y~|~f(y) \le 0 \}
	=\{y~|~(y - \mu)^T\Sigma^{-1}(y - \mu) \le 1\}	
\end{align*}
with $\mu \in \reals^n$ and $\Sigma \in \symm^{n}_{++}$, the positive definite
matrix cone, representing the center and shape of the uncertainty, respectively.
Note that the minimum of the convex quadratic,
\[
    y^TAy - 2b^Ty,
\]
with $A$ positive definite is given by
\[
\inf_{y}\left(y^TAy - 2b^Ty\right) = -b^TA^{-1}b.
\]
Here, $y^* = A^{-1}b$, is found by setting the gradient to zero---this is
necessary and sufficient by convexity and differentiability. The dual function
is then,
\begin{align*}
g(x, \lambda)
&= \inf_y\left(\normsq{y} - 2x^Ty + \lambda((y - \mu)^T\Sigma^{-1}(y - \mu)- 1)\right)\\
&= \inf_y\left(y^T(\lambda\Sigma^{-1} + I)y - 2(x +\lambda\Sigma^{-1}\mu)^Ty + \lambda\left(\mu^T\Sigma^{-1}\mu - 1\right)\right)\\
&=-(x + \lambda\Sigma^{-1}\mu)^T(\lambda\Sigma^{-1} +  I)^{-1}(x +
\lambda\Sigma^{-1}\mu) + \lambda(\mu^T\Sigma^{-1}\mu - 1).
\end{align*}
Though the function $g$ can immediately be written in standard semidefinite
program (SDP) form via the Schur complement, it is possible to convert it into a
second-order cone constraint form, which is usually more amenable to embedded
solvers (\eg, ECOS~\cite{ecos:2013}).

First, since $\Sigma$ is positive definite, it has an eigendecomposition
$\Sigma = UDU^T$, where $U \in \reals^{n\times n}$ is an orthogonal matrix that
satisfies $UU^T = U^TU = I$ and $D \in \reals^{n\times n}$ is a diagonal matrix
with positive entries. Using the fact that $\Sigma^{-1} = UD^{-1}U^T$, we can
write
\[
(\lambda \Sigma^{-1} + I)^{-1}
= (\lambda  UD^{-1}U^T +  UU^T)^{-1}
= U(\lambda D^{-1} + I)^{-1}U^T,
\]
which gives,
\begin{equation}\label{dual}
	\begin{aligned}
		g(x, \lambda)
		&= -(U^T(x + \lambda\tilde\mu))^T(D^{-1} + \lambda I)^{-1}(U^T(x +
        \lambda\tilde\mu))
	+ \lambda(\tilde\mu^T\Sigma\tilde\mu - 1)\\
		&= -\sum_{i=1}^n \frac{(u_i^Tx + \lambda u_i^T\tilde \mu)^2}{D_{ii}^{-1}
        + \lambda} +\lambda(\tilde\mu^T\Sigma\tilde\mu - 1),
	\end{aligned}
\end{equation}
where $\tilde \mu = \Sigma^{-1}\mu$ and $u_i$ is the $i$th column of $U$. After 
substituting~\eqref{dual} into~\eqref{eq:main} we obtain the following 
optimization problem,	
\begin{equation}
    \label{eq:final}
	\begin{aligned}
		& \underset{x, \ \lambda}{\text{minimize}}
		& & \normsq{x-\xgoal} \\
		& \text{subject to}
		& & \normsq{\xcurr} - 2x^T\xcurr +\sum_{i=1}^n \frac{(u_i^Tx + \lambda u_i^T\tilde \mu)^2}{D_{ii}^{-1}
        + \lambda} \le \lambda(\tilde\mu^T\Sigma\tilde\mu - 1) \\
    	& & &  \lambda \ge 0,
	\end{aligned}
\end{equation}
where $\Sigma$ and $\tilde \mu = \Sigma^{-1}\mu$ are given by the
uncertainty of the other agent's position, and $\xgoal$ and $\xcurr$
are known by the ego agent (\ie, the agent solving the optimization problem).

As there is a standard approach for converting the sum of $n$
quadratic-over-linear terms into $n$ second-order cone (SOC) constraints (\eg,
see~\cite{lobo:1998}) and an affine constraint, this formulation can then be
used directly with embedded SOCP solvers. It can also be automatically converted
to an SOCP by modeling languages such as CVXPY~\cite{cvxpy:2018}.

%results
%!TEX root = ../root.tex
\section{Simulation Results}\label{results}
\subsection{Projection Implementation}\label{sec:results:impl}
To get an accurate estimate of the speed of the projection algorithm,
the optimization problem outlined in~\eqref{eq:final} was implemented in the
Julia language~\cite{bezanson:2017} using the JuMP mathematical programming
language~\cite{dunning:2017} and solved using ECOS~\cite{ecos:2013}.
285 instances of the problem were generated with 100 randomly generated
ellipsoids in $\reals^3$. Timing and performance results for generating and
solving the corresponding convex program can be found in
table~\ref{tab:table_all}. Figure~\ref{fig:timings} shows how the performance
scales as the number of other agents increases. All times reported are on a
2.9GHz 2015 dual-core MacBook Pro.

\begin{center}
    \begin{minipage}[]{0.47\textwidth}
        \centering
        \includegraphics[width=0.99\textwidth]{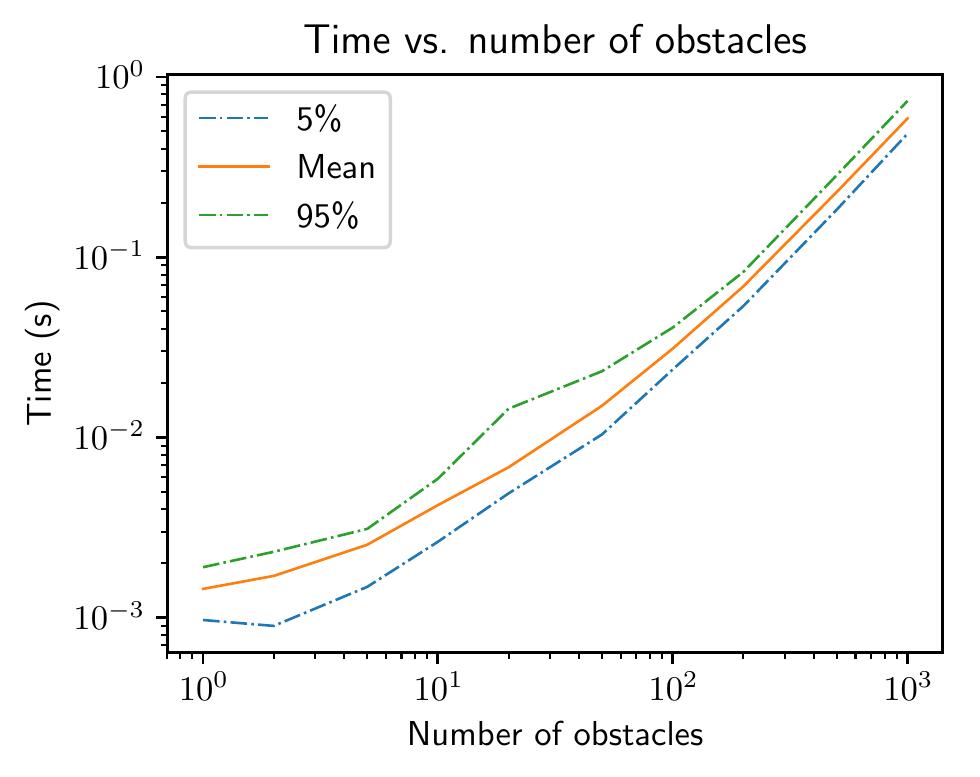}
        \captionof{figure}{Graph showing total time for generating and solving
        optimization problem~\eqref{eq:final} as a function of the number of
        ellipsoids in the problem when solved using the ECOS solver. Note the
        logarithmic scales on both axes.}
        \label{fig:timings}
    \end{minipage}
    \hspace{1mm}
    \begin{minipage}[]{0.47\textwidth}
        \centering
        \begin{tabular}{@{}lr@{}}
            \toprule
            \textbf{Time} & \textbf{Total (GC \%)} \\
            \midrule
            Minimum & 13.20\si{\milli\second} (00.00\%)\\
            Median & 17.12\si{\milli\second} (00.00\%) \\
            Mean  & 17.55\si{\milli\second} (08.80\%) \\
            Maximum  & 36.77\si{\milli\second} (10.59\%) \\
            \bottomrule
        \end{tabular}
        \vspace{1em}
        \captionof{table}{Table reporting times with garbage collection (GC) 
        precentage for building and solving problems with 100 randomly 
        generated 3D ellipsoids. Statistics are based on
        285 instances and were obtained from the
        \texttt{BenchmarkTools.jl}~\cite{chen:2016} package.}
        \label{tab:table_all}
    \end{minipage}
\end{center}

\subsection{Trajectory Simulations}
The projection algorithm was implemented in both 2D and 3D with a varying number
of agents.\footnote{A video of the simulations can be found at
\url{https://youtu.be/oz-bMovG4ow}.} Each agent knows their position exactly and
maintains a noisy estimate of other agents' positions, with uncertainties
represented as ellipsoids. This estimate is updated by a set-membership based
filter~\cite{bertsekas::1971,liu::2016,shah:2019}, a variant of the Kalman
filter. We expand the uncertainty ellipsoid by a given margin to account for the
robot's physical size.  If this margin is also ellipsoidal then a small
ellipsoid which contains the Minkowski sum of the uncertainty ellipsoid and the
margin can be found in closed form~\cite{liu::2016,shah:2019}. This new bounding
ellipsoid is used in the projection algorithm to account for a user defined
margin, along with the uncertainty ellipsoid containing the noisy sensor
information.

Figure~\ref{fig::2dsim-dist} shows the minimum inter-agent distances for each 
agent in the simulation scenario mentioned above. The collision threshold was 
set to $.4\si{\meter}$, twice the radius of the agents. Although our method 
results in longer paths, it remains collision free, while RVO's paths result in 
collision. 
\begin{figure}	
	\centering
    \includegraphics[width=.99\textwidth]{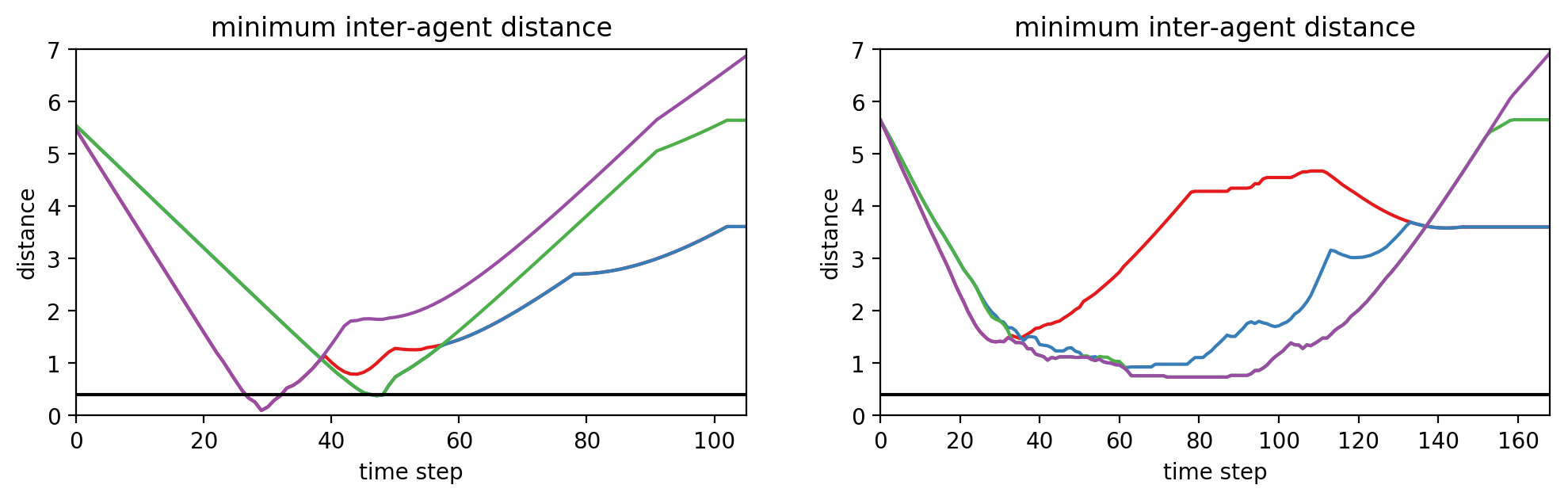}
	\caption{Inter-agent distances RVO (left) and our method (right). The RVO 
	simulation results in inter-agent distances below the collision threshold 
	(black line).}
	\label{fig::2dsim-dist}
\end{figure}

Figure~\ref{fig::3dsim} shows six instances of a 3D simulation with 10
agents. The agents start at the sides of a
$10\si{\meter}\times10\si{\meter}\times10\si{\meter}$ cube and are constrained
to a maximum speed of $6\si{\meter/\second}$ and a maximum measurement error set
to $1.0\si{\meter}$.  The algorithm was run at $60\si{\hertz}$.
The agents, displayed as quadrotors, each have a bounding box of
$0.45\si{\meter}\times0.45\si{\meter}\times0.2\si{\meter}$ and an additional
ellipsoidal margin with axis lengths of $.3\si{\meter}$ in the $x$ and
$y$ dimensions, and $1.2\si{\meter}$ in the $z$ dimension.
This margin effectively gives a buffer of $0.75\si{\meter}$ in the $xy$ plane
and a large buffer of $1.3\si{\meter}$ in $z$. We assume
non-spherical margins in this simulation since, in the case of quadrotor flight,
large margins in the $z$ direction can prevent unwanted effects due to
downwash~\cite{downwash:2017}. The minimum inter-agent distance during the
simulation, as measured from the centers of the agents, was $1.6\si{\meter}$.

\begin{figure}[h]
    \centering
    \includegraphics[width=.4\textwidth]{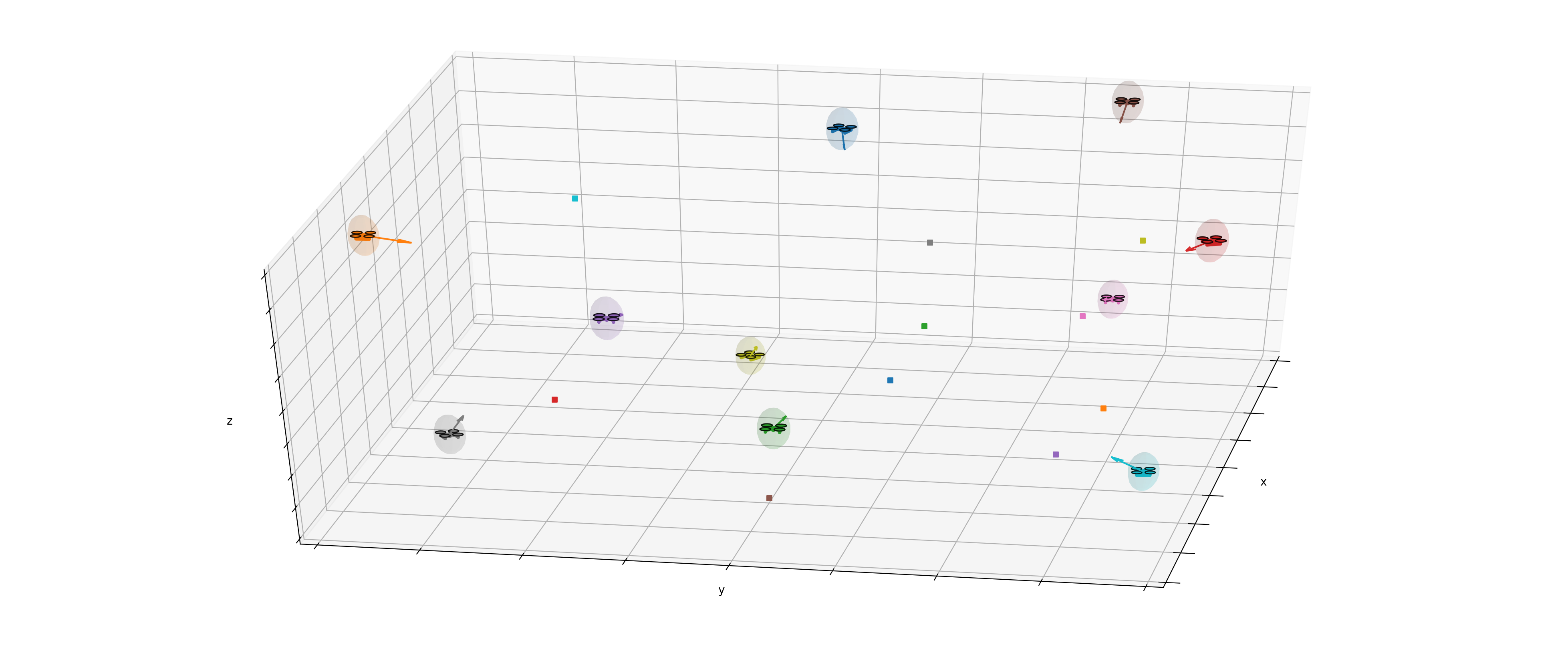}
    \includegraphics[width=.4\textwidth]{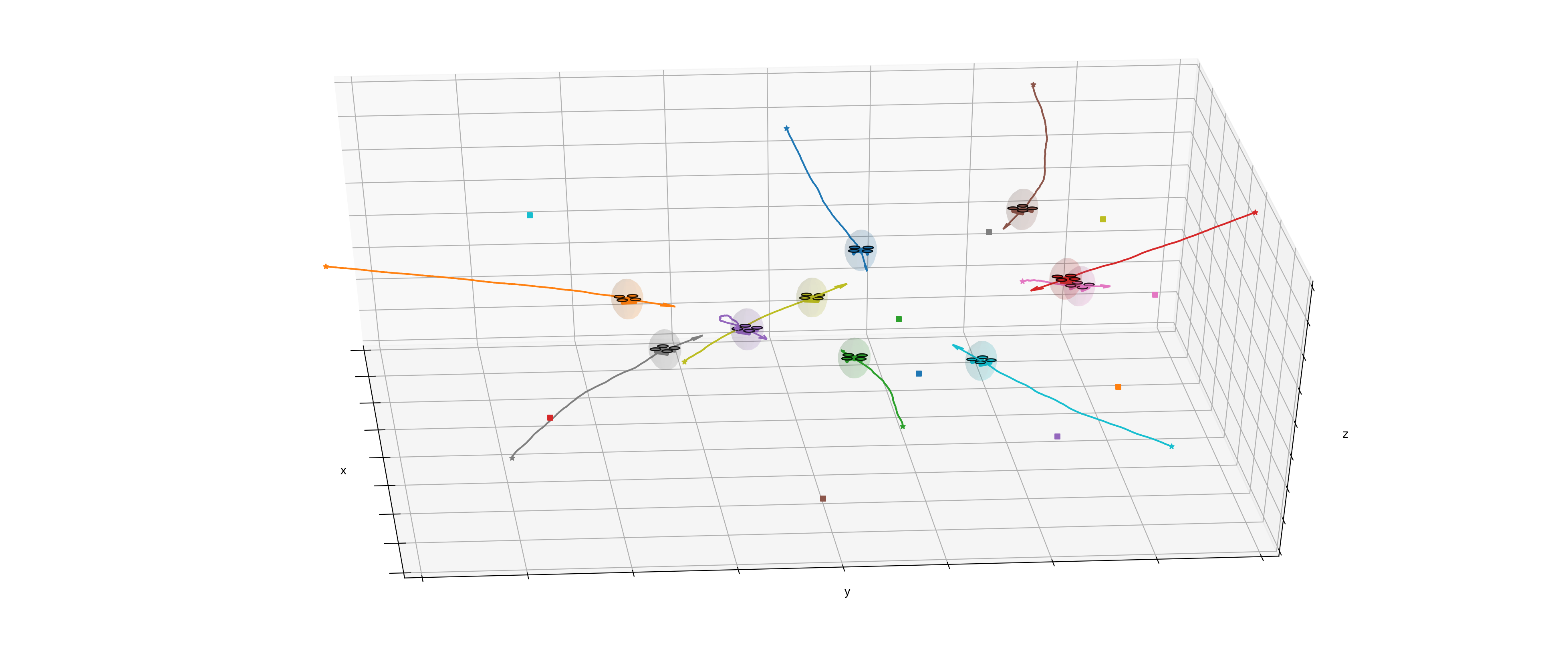}
    $t=0\hspace{2in} t=60$\\
    \includegraphics[width=.4\textwidth]{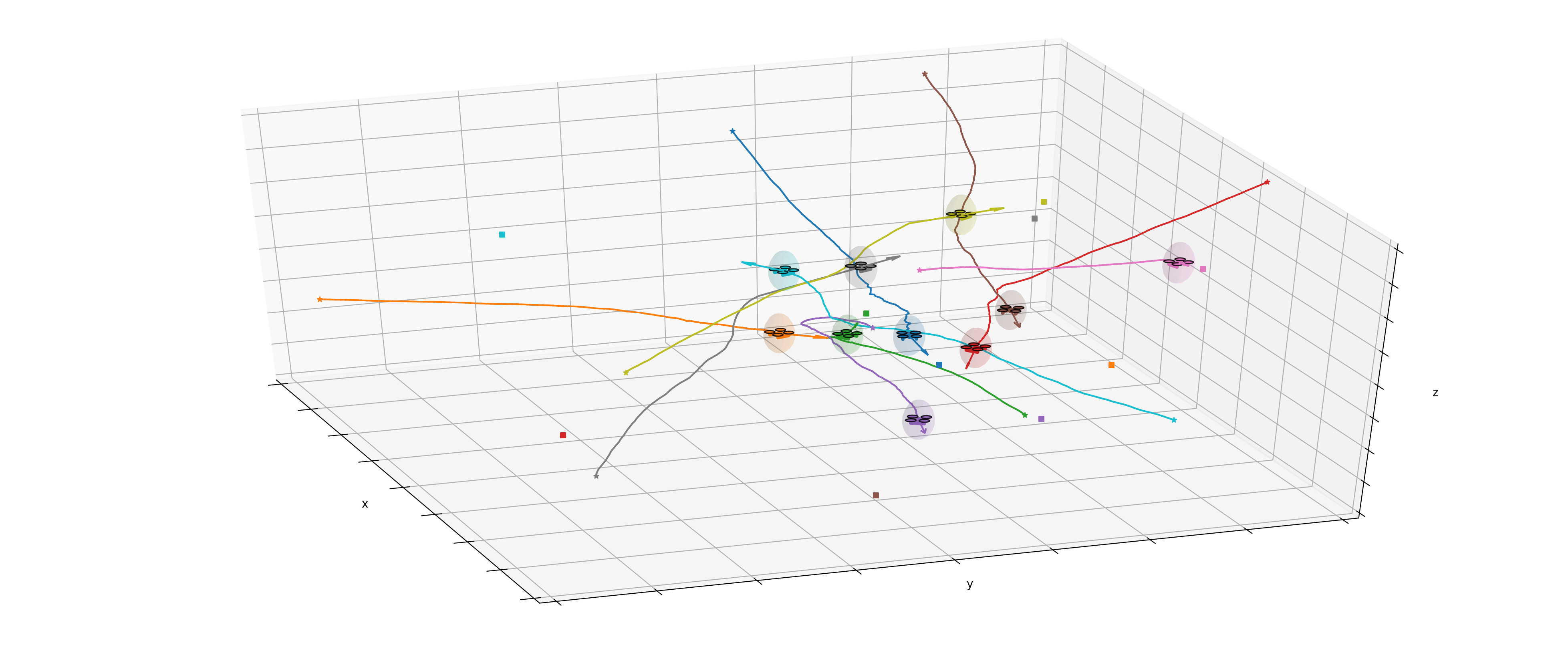}
    \includegraphics[width=.4\textwidth]{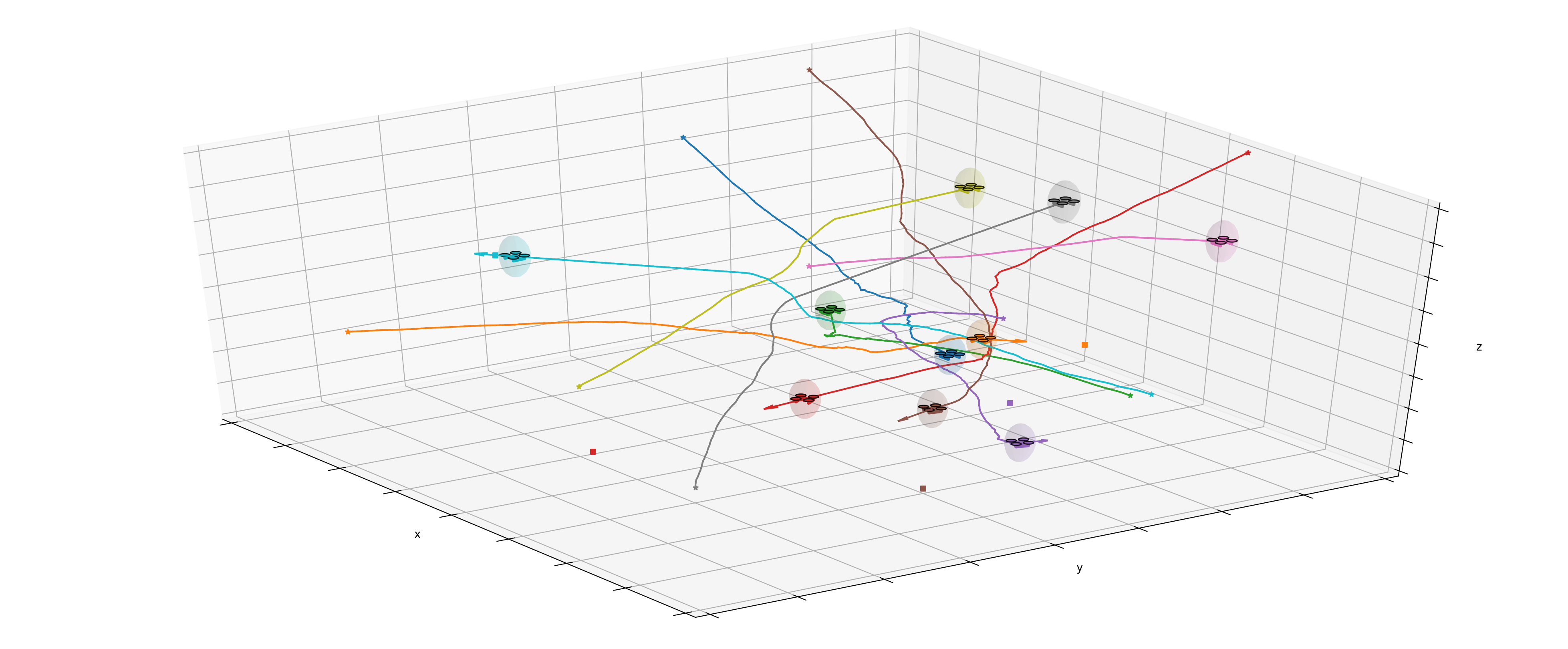}
    $t=120\hspace{2in} t=180$\\
    \includegraphics[width=.4\textwidth]{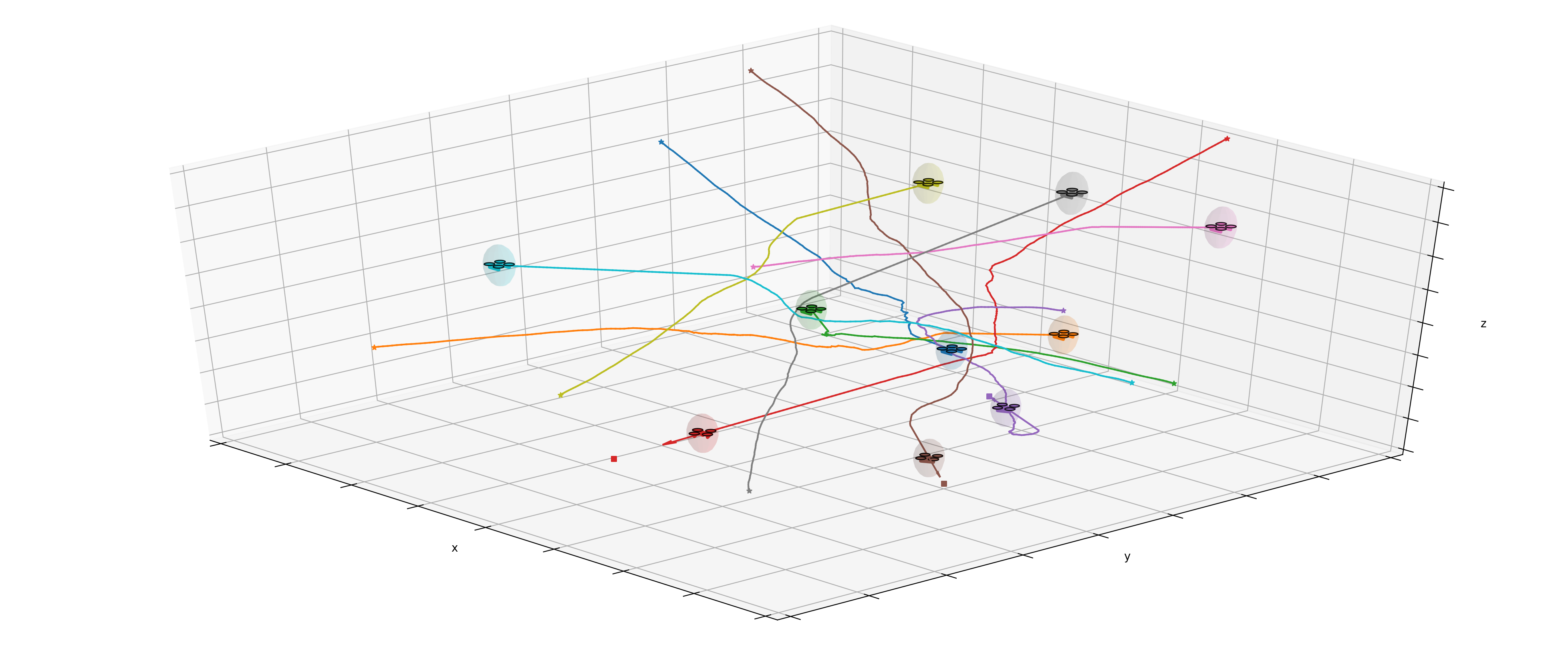}
    \includegraphics[width=.4\textwidth]{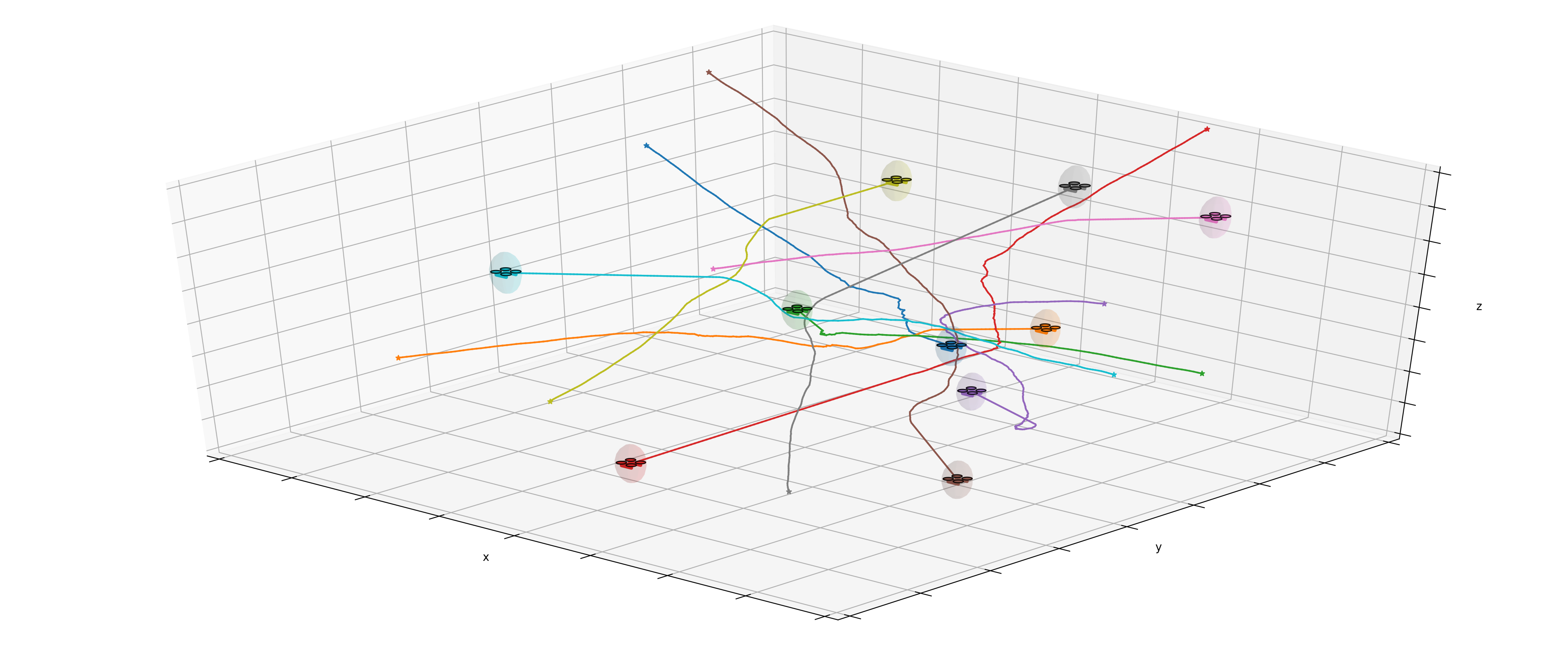}
    $t=210\hspace{2in} t=232$
    \vspace{.2in}
    \caption{Six time instances of a 3D simulation of 10 agents. Each agent 
    adds an ellipsoidal margin (shown) elongated in the z-axis to account for 
    downwash affects.}
    \label{fig::3dsim}
\end{figure}

%conc
\section{Closing Remarks} 
\vspace{-2mm}
While cloud and edge computing has relieved many of the burdens
distributed robotic systems encounter, network delays, disconnects,
and other failures are still commonplace at scale. Dependable and fast
algorithms that can run onboard are, therefore, critical for any
certifiable system. In this work, we presented a scalable system that
can work with simple or complex, distributed or centralized high level
planers to provide safe trajectories for a group of agents. Under the
assumptions stated, we showed that collision avoidance is guaranteed,
provided each agent follows this method. However, we observe practical
collision avoidance behavior even if only the ego agent follows this
method.  Computational performance results and simulations provide
evidence that this algorithm can potentially be used in
safety-critical applications for mobile robots with simple dynamics.

Future work will focus on creating a fast, easily extendable library for
automatically generating programs of the form of problem~\eqref{eq:final}, by
making use of the composition rules presented in the appendix. We suspect that
future support for warm starts and the ability to change parameters without
reconstructing the complete problem from scratch (as compared
to~\S\ref{sec:results:impl}) would yield a substantial speed up in solution
time. Additionally, we note that while immediately attempting to use
problem~\eqref{eq:final} in a Model Predictive Control (MPC) formalism yields a
nonconvex problem, we believe that there may be a approach to approximate
solution while retaining the feasibility properties assumed
in~\eqref{eq:proj}. We also plan on incorporating a global planner, which can 
handle static obstacles less conservatively than our
algorithm, into our trajectory planning pipeline.

%\vspace{-7mm}
% ---- Bibliography ----
%\newpage
\bibliographystyle{alpha}
\bibliography{cites}
%appendix
\newpage

\section{Appendix}
\subsection{Dual functions}
In this subsection, we derive the Lagrange dual function for the Voronoi cell
generated by polyhedra.

\paragraph{Polyhedra.}
In the case where our set $\ellipse$ is specified by an affine constraint (\ie,
$\ellipse$ is a polyhedron),
\[
\ellipse = \{y ~|~ Ay \le b\},
\]
with $A \in \reals^{m\times n}$ and $b \in \reals^m$, then the dual function can be easily computed.

First, note that the minimizer of $\normsq{y} - 2c^Ty$ can be found by setting the gradient to zero (this is necessary and sufficient by convexity and differentiability), which yields that the optimal point is $y^* = c$, with optimal value
\[
\inf_y\left(\normsq{y} + 2c^Ty\right) = -\normsq{c}.
\]
Applying this result yields
\[
g(x, \lambda) = \inf_y \left(\normsq{y} - 2x^Ty + \lambda^T(Ay - b)\right) = - \normsq{x-A^T\lambda/2} - b^T\lambda,
\]
with $\lambda \ge 0$.

\subsection{Extensions}
There are several natural extensions to problem~(\ref{eq:main}). These extensions can be combined, as needed.

\paragraph{Union of convex sets.} We can extend the formalism of~(\ref{eq:main}) to include the Voronoi region generated by a finite union of convex sets. That is, if $S$ can be written in the form
\[
S = \bigcup_{i=1}^\ell ~\{y~|~f_i(y) \le 0\},
\]
for convex functions $f_i: \reals^n \to \reals^{m_i}$. Note that, in general,
$S$ will not be convex, while the resulting Voronoi region always is, since the
region is the intersection of a family of hyperplanes. Additionally, we will
require that there exist $y_i^0\in \reals^n$ with $f_i(y_i^0) < 0$ for $i=1,
\dots, \ell$ if $f_i$ is not affine. The corresponding problem is given by
\[
\begin{array}{ll}
\mbox{minimize}   & \normsq{x - \xgoal} \\
\mbox{subject to} & \normsq{\xcurr} - 2x^T\xcurr \le \inf_{f_i(y) \le 0} \left(\normsq{y} - 2y^Tx\right), ~~ i=1, \dots, \ell.
\end{array}
\]
In this case, we simply derive a dual function for each constraint with $i=1, \dots, \ell$, writing each as a constraint as in~(\ref{eq:dual-form}), each of which is convex.

\paragraph{Intersection of a convex set with the Voronoi cell.}
Given a convex set $C$ specified by
\[
C = \{x ~|~ h(x) \le 0\},
\]
where $h:\reals^n \to \reals^r$ is a convex function, then the problem
\[
\begin{array}{ll}
\mbox{minimize}   & \normsq{x - \xgoal} \\
\mbox{subject to} & \normsq{\xcurr} - 2x^T\xcurr \le \inf_{f(y) \le 0} \left(\normsq{y} - 2y^Tx\right)\\
& h(x) \le 0,
\end{array}
\]
is the intersection of the Voronoi cell in question and the set $C$, which is also easily solvable assuming $h$ can be easily evaluated at any point in its domain.

\end{document}